\documentclass{amsart}
\usepackage[foot]{amsaddr}


\usepackage[utf8]{inputenc}
\usepackage[T1]{fontenc}
\usepackage[english]{babel}
\usepackage{amssymb,latexsym}
\usepackage{amsmath}
\usepackage{amsthm}
\usepackage{graphicx}
\usepackage{color}
\usepackage[shortlabels]{enumitem}
\usepackage[colorlinks=true,citecolor={blue},draft=false, urlcolor={black}]{hyperref}

\usepackage{cleveref}

\usepackage{float}
\usepackage{todonotes}

\usepackage{mathtools}


\def\N{\mathbb N}
\def\A{\mathcal A}
\def\B{\mathcal B}
\def\L{\mathcal L}
\def\uu{\mathbf u}
\def\vv{\mathbf v}

\def\Der{\mathrm{Der}_f}
\def\dd{\mathbf d}





\def\N{\mathbb N}
\def\A{\mathcal A}
\def\B{\mathcal B}


\newtheorem{thm}{Theorem}
\newtheorem{coro}[thm]{Corollary}
\newtheorem{lem}[thm]{Lemma}

\newtheorem{obs}[thm]{Observation}
\newtheorem{prop}[thm]{Proposition}
\newtheorem{defi}[thm]{Definition}
\theoremstyle{remark}
\newtheorem{example}[thm]{Example}
\newtheorem{remark}[thm]{Remark}

\crefname{thm}{theorem}{theorems}
\crefname{theorem}{theorem}{theorems}
\crefname{coro}{corollary}{corollaries}
\crefname{example}{example}{examples}
\crefname{lem}{lemma}{lemmas}
\crefname{lmm}{lemma}{lemmas}
\crefname{claim}{claim}{claims}
\crefname{obs}{observation}{observations}
\crefname{proposition}{proposition}{propositions}
\crefname{prop}{proposition}{propositions}
\crefname{defi}{definition}{definitions}

\crefname{example}{example}{examples}




\begin{document}

\title{On substitutions closed under derivation: examples}

\author{Václav Košík \and Štěpán Starosta}

\address[V. Košík]{Department of Mathematics, Faculty of Nuclear Sciences and Physical Engineering, Czech Technical University in Prague, Břehová 7, 115 19, Prague 1, Czech Republic}

\address[Š. Starosta]{Department of Applied Mathematics, Faculty of Information Technology, Czech Technical University in Prague, Thákurova 9, 160 00, Prague 6, Czech Republic}
%
%
%

%

\begin{abstract}
We study infinite words fixed by a morphism and their derived words.
A derived word is a coding of return words to a factor.
We exhibit two examples of sets of morphisms which are closed under derivation --- any derived word with respect to any factor of the fixed point is again fixed by a morphism from this set.
The first example involves standard episturmian morphisms, and the second concerns the period doubling morphism.
\keywords{return word \and derived word \and fixed point of substitution \and Arnoux--Rauzy word \and episturmian word \and period doubling morphism}
\end{abstract}

\maketitle              

\section{Introduction}

In 1998 Fabien Durand characterized primitive substitutive sequences, i.e., morphic images of fixed points of primitive substitutions.
A crucial role in his characterization is played by the notion ``derived word''.
Any primitive substitutive sequence $\uu$ is uniformly recurrent, i.e. for each factor $w$, the distances between consecutive occurrences of $w$ in $\uu$ are bounded.
Or equivalently, there are only finitely many gaps between neighbouring occurrences of $w$.
An infinite word coding ordering of these gaps (seen as finite words) is called the derived word to $w$ in $\uu$ and is denoted ${\bf d_{\bf u}}(w)$.

The mentioned main result of \cite{Durand98} says that a uniformly recurrent word is primitive substitutive if and only if the set of derived words to all prefixes of $\uu$ is finite.
If moreover, $\uu$ is fixed by a primitive substitution, then the derived word to a prefix $w$ of $\uu$ is fixed by a primitive substitution as well.
In other words, given any primitive substitution $\varphi$, there exists a finite list $L = \{\varphi_1, \varphi_2, \ldots,\varphi_k\}$ of primitive substitutions such that for each prefix $w$ of $\uu$, the fixed point of $\varphi$, the derived word ${\bf d_{\bf u}}(w)$ is fixed by a substitution $\varphi_i $ from $L$.
An algorithm which to a given Sturmian substitution creates such list $L$ is described in \cite{KlMePeSt18}.

On the other hand, if $w$ is a non-prefix factor of $\uu$, then it seems that ${\bf d_{\bf u}}(w)$ is fixed by a substitution only exceptionally.
In \cite{KlPeSt19}, this phenomenon is studied for fixed points of Sturmian substitutions.
For this purpose, the following new notion has been introduced:

\begin{defi} \label{DEF}
A finite non-empty set $M$ of primitive substitutions is said to be closed under derivation if the derived word ${\bf d_{\bf u}}(w)$ to any factor $w$ of any fixed point ${\bf u}$ of $\varphi \in M$ is fixed (after a suitable renaming of letters) by a substitution $\psi \in M$.
A primitive substitution $\xi$ is said to be closeable under derivation if it belongs to a set $M$ closed under derivation.
\end{defi}
Sturmian substitutions closeable under derivation are characterized in \cite{KlPeSt19}.
The aim of this contribution is to provide two new examples of sets $M$ closed under derivation.

In our first example, in \Cref{sec:ex1}, the set $M$ is a finite subset of the monoid of episturmian morphisms.
In this case, all substitutions in $M$ act on the same alphabet.
In our second example, in \Cref{sec:ex2}, the substitutions in $M$ act on alphabets with distinct cardinality.
An inspiration for the second example comes from a recent result by Huang and Wen in \cite{HuangWen17}, where a curious property of the period doubling substitution $\psi(a)=ab$ and $\psi(b)=aa$ was observed.

\section{Preliminaries}

Let $\A$ denote an \emph{alphabet} --- a finite set of symbols.
A \emph{word} over $\A$ is a finite sequence $u= u_1u_2\cdots u_n$ where $u_i \in \A$ for all $i=1,2, \ldots, n$.
The \emph{length} of the word $u$ is denoted $|u|$ and is equal to $n$.
The set of all words over $\A$ together with the operation concatenation forms a free monoid $\A^*$, its neutral element is the \emph{empty word} $\varepsilon$.
If $u=pws\in \A^* $, then $w$ is a \emph{factor} of $u$, $p$ is a \emph{prefix} of $u$, and $s$ is a \emph{suffix} of $u$.
For $w=uv$, we write $u=wv^{-1}$ and $v=u^{-1}w$.

A \emph{morphism} $\varphi$ is a mapping $\varphi : \A^* \mapsto \B^*$ such that $ \varphi(uv) =\varphi(u)\varphi(v)$ for all $u,v \in \A^*$.
A morphism $\varphi : \A^* \mapsto \A^*$ is called \emph{primitive} if there exists an iteration $k\in \N $ such that for any pair $a,b$ of letters from $\A$, the letter $a$ occurs in $\varphi^k(b)$.
In accordance with Durand's terminology, a morphism $\varphi$ is a \emph{substitution} if there exist $a\in \A$ and $w\in \A^*, w\neq \varepsilon$ such that $\varphi(a)=aw$ and $|\varphi^n(a)|$ tends to infinity with growing $n$.

An \emph{infinite word} over $\A$ is an infinite sequence ${\bf u }= u_0 u_1u_2\cdots$ from $\A^{\N}$.
A finite word $w$ of length $n$ is a factor of ${\bf u }$ if there exists an index $i\in \N$, such that $w=u_iu_{i+1}\cdots u_{n-1}$.
The index $i$ is called an \emph{occurrence} of $w$ in $\uu$.
The set of all factors of ${\bf u }$ is denoted by $\L(\uu)$.
If each factor $w$ of ${\bf u }$ has infinitely many occurrences, then ${\bf u }$ is \emph{recurrent}.
A \emph{return word} to $w$ in ${\bf u }$ is a factor $r=u_iu_{i+1}\cdots u_{j-1}$, where $i<j$ are two consecutive occurrences of $w$ in ${\bf u }$.
The word $rw$ is called a \emph{complete return word} to $w$ in $\uu $ and obviously, $rw$ is a factor of $\uu$.
The set of all return words to $w$ in ${\bf u }$ is denoted by $\mathcal{R}_{\bf u }(w)$.
If the set $\mathcal{R}_{\bf u }(w)$ is finite, say $\mathcal{R}_{\bf u }(w)= \{r_0,r_1,\ldots, r_{k-1}\}$, then ${\bf u }$ can be written as a concatenation ${\bf u } = pr_{i_0}r_{i_1}r_{i_2}\cdots$, where $p$ is the prefix of {\bf u } such that the factor $w$ occurs in $pw$ exactly once.
The infinite word $ i_0 i_1i_2\cdots$ over the alphabet $\{0,1,2,\ldots, k-1\}$ is the \emph{derived word to $w$ in ${\bf u }$} and is denoted $\dd_{\bf u }(w)$.
A recurrent infinite word ${\bf u }$ is \emph{uniformly recurrent} if the set $\mathcal{R}_{\bf u }(w)$ is finite for all $w\in\L(\uu)$.

The domain of a morphism $\varphi: \A^* \mapsto \B^*$ is naturally extended to $\A^{\N}$ by putting $\varphi({\bf u })= \varphi({u_0 u_1u_2\cdots})= \varphi(u_0) \varphi(u_1)\varphi(u_2)\cdots $.
A word ${\bf u }$ is \emph{purely substitutive} if there exists a substitution $\varphi$ over $\A$ such that ${\bf u }=\varphi({\bf u })$, i.e. ${\bf u }$ is a fixed point of $\varphi$.
A word $\vv$ over $\B$ is \emph{substitutive} if $\vv=\psi(\uu)$, where $\psi: \A^* \mapsto \B^*$ is a morphism and ${\bf u }$ is a purely substitutive word.
If $\uu$ is fixed by a primitive substitution, then $\vv$ is \emph{primitive substitutive}.
A well known fact is that a primitive substitutive word is uniformly recurrent (c.f. \cite{Durand98}).

\section{The set of derived words to factors of an infinite word}

In this section we list several simple properties of the set
\[
\Der(\uu) = \{\dd_{\bf u }(w) \colon w \in \L(\uu) \}.
\]
First, we show that only some special factors need to be examined to describe $\Der(\uu)$.
A letter $a\in \A$ is a \emph{right extension} of $w \in \L(\uu)$ if $wa \in \L(\uu)$.
Note that any factor of $\uu$ has at least one right extension.
A factor $w \in \L(\uu)$ is \emph{right special} if it has at least two distinct right extensions.
Analogously, we define \emph{left special}.

A factor which is simultaneously right and left special is \emph{bispecial}.
\begin{prop}\label{claim1}
Let $\uu$ be an infinite recurrent word over $\A$ and $w \in \L(\uu)$.
\begin{enumerate}[(1)]
\item \label{it:cl1:1} If $w$ is not left special, then $\mathcal{R}_{\bf u }(aw) =a\mathcal{R}_{\bf u }(w)a^{-1}$, where $ a \in \A $ is the unique left extension of $w$.
Moreover, if $w$ is not a prefix of $\uu$, then
$ \dd_{\bf u }(aw)=\dd_{\bf u }(w)$.
\item \label{it:cl1:2} If $w$ is not right special, then $\mathcal{R}_{\bf u }(wa) =\mathcal{R}_{\bf u }(w)$
 and $ \dd_{\bf u }(wa)=\dd_{\bf u }(w)$, where $ a \in \A $ is the unique right extension of $w$.
\end{enumerate}
\end{prop}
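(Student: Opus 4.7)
The plan is to observe that the non-special hypothesis produces a bijection between occurrences of $w$ and of its one-letter extension, then to translate this bijection first into a statement about return words and finally about derived words.

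First, I would establish the bijection of occurrences. In part~(1), since $a$ is the unique left extension of $w$, every occurrence $i\geq 1$ of $w$ in $\uu$ is preceded by $u_{i-1}=a$; conversely, every occurrence of $aw$ at position $j$ yields an occurrence of $w$ at $j+1$. Hence the occurrences of $aw$ are exactly those of $w$ shifted by $-1$, with the occurrence $i=0$ excluded when $w$ happens to be a prefix of $\uu$. In part~(2), the relation $u_{i+|w|}=a$ at every occurrence $i$ of $w$, together with the obvious converse inclusion, shows that the sets of occurrences of $w$ and of $wa$ in $\uu$ coincide.

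Next, I would translate this into the claimed return-word identities. For part~(1), given two consecutive occurrences $i<j$ of $w$ with $i\geq 1$, the return word $r=u_i\cdots u_{j-1}$ satisfies $u_{j-1}=a$ by uniqueness of the left extension, and the corresponding return word to $aw$ at the shifted pair $i-1<j-1$ is $u_{i-1}\cdots u_{j-2}=a(ra^{-1})=ara^{-1}$; ranging over all such pairs yields $\mathcal{R}_{\uu}(aw)=a\mathcal{R}_{\uu}(w)a^{-1}$. For part~(2), the consecutive pairs of occurrences coincide, so the sequences of return words are literally the same and in particular $\mathcal{R}_{\uu}(wa)=\mathcal{R}_{\uu}(w)$.

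To conclude equality of derived words, I would match the prefix $p$ in the decomposition $\uu=pr_{i_0}r_{i_1}\cdots$ and the labelling of the return words. In part~(2) nothing changes: the first occurrences coincide, so $p$ is unchanged and the sequence of labels is identical. In part~(1), provided $w$ is not a prefix of $\uu$, the prefix $p$ for $w$ ends in $a$ (its last letter is $u_{|p|-1}=a$), so the prefix for $aw$ is $pa^{-1}$; under the natural bijection $r\leftrightarrow ara^{-1}$ between the two return-word sets, the labels of successive return words to $aw$ coincide with those of successive return words to $w$, giving $\dd_{\uu}(aw)=\dd_{\uu}(w)$. The only delicate point, and precisely the reason for the hypothesis ``$w$ not a prefix of $\uu$'' in the last sentence, is the boundary case: when $w$ is a prefix the occurrence at position $0$ has no left neighbour and the first return word to $w$ does not arise from a shifted pair, which in general breaks the equality of derived words; away from this boundary the argument is just careful bookkeeping and no real obstacle arises.
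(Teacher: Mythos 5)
Your argument for part (2) and for part (1) in the case that $w$ is not a prefix matches the paper's proof (shift the occurrences, conjugate the return words, match up the prefixes $p$ and $pa^{-1}$). However, there is a genuine gap in part (1): the identity $\mathcal{R}_{\uu}(aw)=a\mathcal{R}_{\uu}(w)a^{-1}$ is asserted by the proposition \emph{unconditionally}; only the derived-word equality carries the extra hypothesis that $w$ is not a prefix. Your derivation of the set identity ranges only over consecutive pairs of occurrences $i<j$ with $i\geq 1$, which are exactly the pairs that survive the shift. When $w$ is a prefix of $\uu$, the first return word $r=u_0\cdots u_{j-1}$ comes from the pair $(0,j)$ and is not captured by any shifted pair, so your argument only yields the inclusion $\mathcal{R}_{\uu}(aw)\subseteq a\mathcal{R}_{\uu}(w)a^{-1}$ and leaves open whether $ara^{-1}$ is actually a return word to $aw$. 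This is precisely where the hypothesis that $\uu$ is \emph{recurrent} enters, and you never use it: the paper argues that the complete return word $rw$ occurring at position $0$ must reoccur at some position $j'>0$ by recurrence, where the occurrence of $w$ is preceded by $a$, whence $ara^{-1}\in\mathcal{R}_{\uu}(aw)$ after all. The hypothesis is not decorative: for the non-recurrent word $\uu=011\,01\,01\,01\cdots$ and $w=0$ (unique left extension $a=1$) one has $\mathcal{R}_{\uu}(0)=\{011,01\}$ but $\mathcal{R}_{\uu}(10)=\{10\}\neq\{101,10\}$. Your closing remark attributes the boundary difficulty only to the derived words; in fact it also affects the return-word sets, and that is the one step of the proof that requires an actual idea rather than bookkeeping.
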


\begin{proof} \Cref{it:cl1:1}:
 First assume that $w$ is not left special and $w$ is not a prefix of $\uu$.
 The integer $i$ is an occurrence of $w$ in $\uu$ if and only if $i-1$ is an occurrence of $aw$ in $\uu$.
 Consequently, $r \in \mathcal{R}_{\bf u }(w)$ if and only if $ara^{-1}\in \mathcal{R}_{\bf u }(w)$ and the ordering of the return words to $w$ in $\uu$ and the ordering the return words to $aw$ in $\uu$ coincide.

 Let $0$ be an occurrence of $w$, i.e., $w$ is a prefix of $\uu$.
 Then a return word $r$ to $w$ and $rw$ have an occurrence $0$.
 We have to show that even for such $r$ the word $ara^{-1}$ belongs to $\mathcal{R}_{\bf u }(aw)$.
 Indeed, the word $\uu$ is recurrent and thus $rw$ has an occurrence $j>0$.
 As $w$ is always preceded by the letter $a$ and $a$ is a suffix of $r$ we can conclude that $ara^{-1}$ is a return word to $aw$ in $\uu$.

 \Cref{it:cl1:2}: The proof is analogous.
 \end{proof}

We formulate a straightforward corollary of \Cref{claim1}.

\begin{prop}\label{claim3}
Let $\uu$ be an infinite recurrent word over $\A$.
We have
\[
\begin{split}
\Der(\uu) = & \left \{\dd_{\bf u }(w) \colon w \text{ is a right special prefix of } \uu \right \} \\ & \cup  \left \{\dd_{\bf u }(w) \colon w \text{ is a bispecial factor of } \uu \right \}.
\end{split}
\]
\end{prop}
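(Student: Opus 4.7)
The inclusion $\supseteq$ is immediate since right special prefixes and bispecial factors are themselves factors of $\uu$. For the reverse inclusion, my plan is to show that for an arbitrary $w\in\L(\uu)$ one can reach either a right special prefix or a bispecial factor $w'$ with $\dd_\uu(w') = \dd_\uu(w)$ by a finite sequence of one-letter extensions, each preserving the derived word via \Cref{claim1}. The procedure has two phases: first right-extend $w$, then left-extend the result.

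In Phase 1, as long as the current factor is not right special, I replace it by its unique right extension; by item \ref{it:cl1:2} of \Cref{claim1} the derived word is unchanged at each such step. This terminates at some right special factor $w_1$ with $\dd_\uu(w_1) = \dd_\uu(w)$. If $w_1$ is a prefix of $\uu$ or is itself left special, we are already done. Otherwise, Phase 2 iterates item \ref{it:cl1:1} of \Cref{claim1}: while the current factor $v$ is right special, not left special, and not a prefix of $\uu$, replace it by $av$, where $a$ is the unique left extension of $v$. The point I would need to verify is that right-specialness survives this step: if $vb,vc\in\L(\uu)$ are two distinct right extensions, then, $v$ being neither left special nor a prefix of $\uu$, every occurrence of $v$ is preceded by $a$, so in particular every occurrence of $vb$ and of $vc$ is preceded by $a$, giving $avb,avc\in\L(\uu)$. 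Phase 2 stops when the current factor becomes either a prefix of $\uu$ or left special; in the former case it is a right special prefix, in the latter (since right-specialness was preserved throughout) it is a bispecial factor, and in both cases the derived word equals $\dd_\uu(w)$.

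The main technical points I expect to handle carefully are the termination of both phases and the ``not a prefix of $\uu$'' bookkeeping for item \ref{it:cl1:1} of \Cref{claim1} at each iteration of Phase 2. Termination of Phase 1 would fail only in degenerate ``periodic-type'' situations where $w$ admits no right special extension at all; Phase 2 is driven by the strict increase of $|v|$ while $v$ remains in $\L(\uu)$, which in the recurrent infinite setting must eventually force $v$ to become either a prefix of $\uu$ or a left special factor, so the whole procedure is finite and produces the desired $w'$.
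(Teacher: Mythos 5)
Your two-phase extension argument is exactly the intended elaboration of the paper's remark that this is a ``straightforward corollary'' of \Cref{claim1} (the paper gives no proof at all), so the approach matches and the proof is essentially correct. The only things to tighten are the two termination claims you flagged: Phase~1 terminates because if no right-extension of $w$ were ever right special, the letters following every occurrence of $w$ would be uniquely determined forever, forcing $\uu$ to be eventually (hence, by recurrence, purely) periodic --- an implicit aperiodicity assumption the proposition needs anyway, since for periodic words the right-hand side can be empty; and Phase~2 terminates not because $|v|$ grows but because each left-extension step decreases the index of the first occurrence of the current factor by one (it is $\geq 1$ since the factor is not a prefix), so after finitely many steps you must reach position $0$, i.e.\ a prefix, unless a left special factor is met first.
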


The following claim is taken from Durand's article.
His proof is constructive and provides an algorithm for finding a suitable morphism.

\begin{prop}[\cite{Durand98}] \label{11}
Let $\uu\in \A^{\N}$ be a fixed point of a primitive morphism $\varphi$ and $w$ be a prefix of $ \uu$.
The word $\dd_{\bf u }(w)$ is fixed by a primitive morphism as well.
\end{prop}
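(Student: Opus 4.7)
The plan is to construct from $\varphi$ a morphism $\psi$ on the alphabet $\{0,1,\ldots,k-1\}$ indexing the return words $\mathcal{R}_\uu(w)=\{r_0,\ldots,r_{k-1}\}$ so that $\dd_\uu(w)$ is a fixed point of $\psi$. Because $w$ is a prefix of $\uu$, one has the canonical factorisation $\uu=r_{i_0}r_{i_1}r_{i_2}\cdots$ with $\dd_\uu(w)=i_0 i_1 i_2 \cdots$. Before constructing $\psi$, I would replace $\varphi$ by a sufficiently high power $\varphi^N$; this preserves both the fixed point and primitivity, and for $N$ large enough also guarantees $|\varphi(w)|\geq |w|$. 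Since $w$ is a prefix of $\uu=\varphi(\uu)$, the word $\varphi(w)$ is also a prefix of $\uu$, so $w$ is a prefix of $\varphi(w)$; write $\varphi(w)=wu$ for some $u\in\A^*$.

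The core step is to show that for each $j\in\{0,\ldots,k-1\}$, $\varphi(r_j)$ decomposes uniquely as a concatenation of return words to $w$. First note that each $r_j$ begins with $w$: indeed, the complete return word $r_jw$ contains only two occurrences of $w$ (at positions $0$ and $|r_j|$), so the first $|w|$ letters of $r_j$ spell $w$. Consequently $\varphi(r_j)w$ is a prefix of $\varphi(r_jw)=\varphi(r_j)\varphi(w)=\varphi(r_j)wu$ and hence a factor of $\uu$; it starts and ends with $w$. Because every occurrence of $w$ in any factor of $\uu$ lies at a return-word boundary in the canonical factorisation of $\uu$, listing the successive occurrences of $w$ within $\varphi(r_j)w$ yields a decomposition
\[
\varphi(r_j)=r_{j_1}r_{j_2}\cdots r_{j_{\ell_j}}.
\]
Setting $\psi(j)=j_1 j_2\cdots j_{\ell_j}$ defines the desired morphism.

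I then verify the fixed-point property and that $\psi$ is a substitution in Durand's sense. Applying $\varphi$ to the canonical factorisation and regrouping gives
\[
\uu=\varphi(\uu)=\varphi(r_{i_0})\varphi(r_{i_1})\cdots,
\]
and by the previous paragraph the right-hand side is a factorisation of $\uu$ into return words, which by uniqueness must coincide with $r_{i_0}r_{i_1}\cdots$. Reading off indices yields $\psi(i_0 i_1\cdots)=i_0 i_1\cdots$, i.e.\ $\psi(\dd_\uu(w))=\dd_\uu(w)$. Moreover $\psi(i_0)$ starts with $i_0$ because $\varphi(r_{i_0})$ is a prefix of $\uu$ and its return-word decomposition must therefore begin with $r_{i_0}$; and $|\psi^n(i_0)|\to\infty$ because $|\varphi^n(r_{i_0})|\to\infty$ by primitivity of $\varphi$.

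Primitivity of $\psi$ follows from that of $\varphi$ together with uniform recurrence of $\uu$. For any pair $i,i'\in\{0,\ldots,k-1\}$, for $n$ large enough $\varphi^n(r_i)$ is a factor of $\uu$ long enough that uniform recurrence forces it to contain $r_{i'}w$ as a factor; the two $w$-occurrences at the endpoints of this copy of $r_{i'}w$ are consecutive among the $w$-occurrences in $\varphi^n(r_i)w$ (because $r_{i'}$ itself has no further internal occurrence of $w$), hence they form two consecutive return-word boundaries in the decomposition of $\varphi^n(r_i)$, so $r_{i'}$ occurs there and $i'$ appears in $\psi^n(i)$. I expect the principal delicate point to be precisely this alignment argument: namely that the occurrences of $w$ inside $\varphi(r_j)w$ really correspond to consecutive return-word boundaries of the canonical factorisation of $\uu$, which in turn relies on the uniqueness of that factorisation. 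Everything else is essentially bookkeeping.
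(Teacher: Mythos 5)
Your construction is exactly the one the paper (following Durand) sketches: decompose $\varphi(r_j)$ into return words to $w$, read off the index morphism, and check that it fixes $\dd_{\uu}(w)$ and inherits primitivity; your additional care about the alignment of $w$-occurrences with return-word boundaries and about taking a power of $\varphi$ only fills in details the paper omits. One harmless slip: a return word $r_j$ need not begin with $w$ when $|r_j|<|w|$ (only $r_jw$ has $w$ as a prefix), but the steps that follow use only $\varphi(w)=wu$ and the boundary argument, so nothing breaks.
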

 \begin{proof}[Sketch of the proof]
We do not repeat the whole proof, we only describe the construction of a primitive morphism fixing $\dd_{\bf u }(w)$.

 Let $r_0,r_1, \ldots, r_{k-1}$ be the return words to $w$. Since $\uu $ is fixed by $\varphi$, the image $\varphi(w)$ has a prefix $w$ and thus $\varphi(r_iw)$ has a prefix $\varphi(r_i)w$. As $w$ is a prefix and a suffix of $\varphi(r_i)w$, the factor $\varphi(r_i)$ is concatenation of several return words to $w$, i.e. we can find unique indices $s_1,s_2, \ldots, s_{\ell_i} \in \{ 0,1,\ldots, k-1\}$ such that
 $\varphi(r_i) = r_{s_1}r_{s_2}\cdots r_{s_{\ell_i}}$. It is easy to check that the morphism given by
 \begin{equation*}\label{subst}
 \delta:\quad i\mapsto s_1s_2\cdots s_{\ell_i} \quad \text{for each } i\in \{ 0,1,\ldots, k-1\}
 \end{equation*}
is primitive and fixes $\dd_{\bf u }(w)$. All details can be found in \cite{Durand98}.
\end{proof}

\begin{prop}\label{der_of_factor}
Let $\uu\in \A^{\N}$ be a fixed point of a primitive morphism $\varphi$ and $w \in \L(\uu)$.
The word $\dd_{\bf u }(w)$ is primitive substitutive.
\end{prop}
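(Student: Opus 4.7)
The plan is to exhibit $\dd_{\bf u}(w)$ as a morphic image of $\dd_{\bf u}(P)$ for a well-chosen prefix $P$ of $\uu$, and then invoke \Cref{11}.

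First, I would let $i$ be the position of the first occurrence of $w$ in $\uu$ and set $P = u_0 u_1 \cdots u_{i+|w|-1}$, so that $P$ is a prefix of $\uu$ ending at the (sole) occurrence of $w$ inside $P$. By \Cref{11}, $\dd_{\bf u}(P)$ is fixed by a primitive morphism.

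Next, I would decompose $\uu = s_{t_0}s_{t_1}s_{t_2}\cdots$ as a concatenation of return words to $P$, so that $\dd_{\bf u}(P) = t_0 t_1 t_2 \cdots$. Each block $s_{t_j}$ begins with an occurrence of $P$; hence it contains an anchor occurrence of $w$ at its relative position $i$, and the next anchor occurrence of $w$ sits at relative position $|s_{t_j}|+i$ (inside the following block). Any further occurrences of $w$ lying strictly between the two anchors, together with the corresponding return words to $w$, are determined by the word $s_{t_j} P$, which in turn depends only on the label $t_j$. This lets me define a morphism $\eta$ that sends each label $n$ of a return word to $P$ to the concatenation of the labels of the return words to $w$ appearing in one block labeled $n$.

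A block-by-block check should then give $\dd_{\bf u}(w) = \eta(\dd_{\bf u}(P))$, once one observes that the prefix $u_0 u_1 \cdots u_{i-1}$ of $\uu$ preceding the first $w$ coincides with the length-$i$ prefix of $P$ that is chopped off at the very start and that this matches the prefix appearing in the definition of $\dd_{\bf u}(w)$. Since $\dd_{\bf u}(P)$ is fixed by a primitive morphism and $\eta$ is a morphism, $\dd_{\bf u}(w)$ is a morphic image of a fixed point of a primitive substitution, which is precisely the definition of primitive substitutive.

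The main point that needs care is the well-definedness of $\eta$: one must verify that the factorization of each block into return words to $w$ is intrinsic to the block's label, not to its position in $\uu$. This is true because the word $s_n P$ is determined by $n$, so the positions of $w$ within it --- and hence the associated return words --- do not depend on where the block occurs in $\uu$. Once this is established, the rest of the argument is essentially bookkeeping.
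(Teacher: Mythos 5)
Your proof is correct and takes essentially the same route as the paper's: both choose $P=pw$ to be the shortest prefix of $\uu$ containing $w$, invoke \Cref{11} for $\dd_{\bf u}(P)$, and write $\dd_{\bf u}(w)$ as the image of $\dd_{\bf u}(P)$ under the morphism sending the label of a return word $r$ to $P$ to the sequence of labels of the return words to $w$ composing $p^{-1}rp$. Your ``anchor'' bookkeeping is exactly this conjugation by $p$, so there is no substantive difference.
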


\begin{proof}
Let $pw$ be the shortest prefix of $\uu$ containing the factor $w$.
Denote by $r_0,r_1, \ldots, r_{k-1}$ the return words to $pw$ and by $\tilde{r}_0,\tilde{r}_1, \ldots, \tilde{r}_{j -1}$ the return words to $w$.
As $w$ is a prefix and a suffix of the factor $ p^{-1}r_ipw$, the word $ p^{-1}r_ip $  can be written as concatenation of the return words to $w$, i.e. $ p^{-1}r_ip= \tilde{r}_{s_1}\tilde{r}_{s_2}\cdots \tilde{r}_{s_{\ell_i}}$ for some indices $ s_1,s_2, \ldots, s_{\ell_i} \in \{0,1,\ldots, j-1\}$.
 Define a morphism $\psi: \{ 0,1,\ldots, k-1\}^* \mapsto \{ 0,1,\ldots, j-1\}^* $ by
 \begin{equation*}\label{subst2}
 \psi :\quad i\mapsto s_1s_2\cdots s_{\ell_i} \quad \text{for each } i\in \{ 0,1,\ldots, k-1\}.
 \end{equation*}
 It follows that $\dd_{\bf u }(w)=\psi\bigl(\dd_{\bf u }(pw)\bigr) $.
 By \Cref{11}, $\dd_{\bf u }(pw)$ is fixed by a primitive substitution.
 \end{proof}

We finish this section by an example.

\begin{example}\label{example factor a}
Recall the period doubling substitution
 \begin{equation*}\label{perioddoubling}
            \psi(a)=ab \quad \text{ and } \quad \psi(b)=aa,
\end{equation*}
and its fixed point
$$ \mathbf{z} = abaaabababaaabaaabaa\ldots. $$

\begin{itemize}
\item Any occurrence of the letter $b$ is preceded and followed by the letter $a$, therefore $b$ is neither right nor left special. By \Cref{claim1}, $$\dd_{\bf \mathbf{z}}(b) = \dd_{\bf \mathbf{z}}(ab) =\dd_{\bf \mathbf{z}}(aba).$$

 \item There are two return words to $a$ in ${\bf \mathbf{z}}$, namely $r_0=ab$ and $r_1=a$.
 We can write
 $$ \mathbf{z} = r_0r_1r_1r_0r_0r_0r_1r_1r_0r_1r_1r_0r_1\ldots \quad \text{and thus} \quad \dd_{\bf \mathbf{z}}(a)=
 0110001101101\ldots. $$
 The word $ \dd_{\bf \mathbf{z}}(a)$ is fixed by a substitution.
 To find it, we compute
 $$\psi(r_0)= \psi(ab) = abaa= r_0r_1r_1 \quad \text{and } \quad \psi(r_1)= \psi(a) = ab= r_0 .$$
It follows from the proof of \Cref{11} that $ \dd_{\bf \mathbf{z}}(a)$ is fixed by the substitution $\xi$ determined by
\begin{equation*}\label{ka} \xi(0)=011 \quad \text{ and } \quad \xi(1)=0.
\end{equation*}
\end{itemize}
 \end{example}

\section{Example 1: Standard episturmian morphisms} \label{sec:ex1}
Let us recall the definition of standard Arnoux--Rauzy words and known results on morphisms fixing these words.
All mentioned facts and further results can be found in the survey \cite{GlJu}. 

\begin{defi}
An infinite word $\uu \in \A^\N$ is \emph{Arnoux--Rauzy} if
\begin{enumerate}
\item  $\uu$ has exactly one right special factor of each length;
\item $wa \in \mathcal{L}(\uu)$ for every right special factor $w$ of $\uu$ and every letter $a \in \A$;
    \item  $\mathcal{L}(\uu)$  is closed under reversal, i.e. $v_1v_2\cdots v_n \in \mathcal{L}(\uu)$  implies  $v_nv_{n-1}\cdots v_1 \in \mathcal{L}(\uu)$.
\end{enumerate}
An Arnoux--Rauzy word $\uu$ is \emph{standard} if each of its prefixes is a left special factor of $\uu$.
\end{defi}

The Arnoux--Rauzy words represent a  generalization of Sturmian words to multiliteral alphabets and share many properties with Sturmian words.
A property which is important for a description of their derived words is that Arnoux--Rauzy words  are aperiodic and by \cite{DrJuPi} they are also uniformly recurrent.
Let $\mathcal{M}_\A $ denote the monoid generated by \emph{standard episturmian morphisms} $L_a$ defined for every $a \in \A$ as follows:
$$
L_a:
\begin{cases}
a \to a \\
b \to ab \ \ \ \text{for all } b\neq a
\end{cases}.
$$
To abbreviate the notation of elements of the monoid $\mathcal{M}_\A $, we put
$$
L_z= L_{z_1}\circ L_{z_2}\circ \cdots \circ  L_{z_n}\quad \text{for\ } \ z = z_1z_2z\cdots z_n\in {\A}^*.
$$
 A morphism $L_z \in \mathcal{M}_\A $ is primitive if and only if each letter from $\A $ occurs in $z$.
 Any primitive morphism in $\mathcal{M}_\A $ has only one  fixed point and this fixed point is a standard Arnoux--Rauzy word.
 On the other hand, if a standard Arnoux--Rauzy word is fixed by a primitive substitution, then it is fixed by a primitive morphism from the monoid $\mathcal{M}_\A $. 

\begin{example} \label{Example_Tribonacci}
Let us consider the Tribonacci word $\uu_\tau = abacabaabacababacabaa \cdots$ ---  the fixed point of the morphism $\tau: a \mapsto ab, b \mapsto ac, c \mapsto a$.
The word $\uu_\tau$ is a standard Arnoux-Rauzy word  over $\{a,b,c\}$ and it is fixed also by the morphism $\tau^3$.
It is easy to check that $\tau^3= L_{abc}$ and thus the Tribonacci word is fixed by a substitution from $\mathcal{M}_\A$.
\end{example}

K. Medková in \cite{Med19} studies derived words of Arnoux--Rauzy words.
She considers all Arnoux--Rauzy (not only standard) words, but she describes derived words only to prefixes of infinite words.
To quote a consequence of one of her results we need to recall the cyclic shift operation on ${\A}^*$:
$$ {\rm cyc}(z_1z_2\cdots z_n)= z_nz_1\cdots z_{n-1}.
$$
\begin{prop}[Theorem 24 in \cite{Med19}]\label{katka}
 Let $L_z\in \mathcal{M}_\A, z \in \A^*, $ be a primitive morphism and  $\uu$ be its fixed point.
If  $w$ is a prefix of $\uu$, then there exists  $k\in \{1,2,\ldots,|z|\}$ such that  $\dd_{\bf \mathbf{u}}(w)$ is fixed (up to a permutation of letters) by $L_{{\rm cyc}^k(z)}$.
In particular, the word $\dd_{\bf \mathbf{u}}(w)$ is a standard Arnoux--Rauzy word.
\end{prop}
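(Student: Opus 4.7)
The plan is to argue by induction on $|w|$, exploiting the identity $\uu = L_{z_1}(\vv)$, where $\vv$ is the fixed point of $L_{\tilde z}$ with $\tilde z := {\rm cyc}^{n-1}(z) = z_2 z_3 \cdots z_n z_1$ and $n = |z|$. This identity follows from $L_{z_1} \circ L_{\tilde z} = L_z \circ L_{z_1}$ together with uniqueness of the fixed point of $L_z$.

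The central observation is that $L_{z_1}(b) \in \{z_1, z_1 b\}$ for every $b \in \A$, so in the decomposition $\uu = L_{z_1}(v_0)L_{z_1}(v_1)\cdots$ each block starts with $z_1$ and contains no other occurrence of $z_1$. Hence every occurrence of $z_1$ in $\uu$, and in particular every occurrence of a prefix $w$ of $\uu$, lies at a block boundary. Writing $w = L_{z_1}(w')\, q$ with $w'$ the prefix of $\vv$ corresponding to the full blocks of $w$ and $q \in \{\varepsilon, z_1\}$ the partial block at the end, a comparison of the first two letters of each block shows that ``$w$ occurs at block $j$ of $\uu$'' is equivalent to ``$w'$ occurs at position $j$ of $\vv$''. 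Consequently, the return words to $w$ in $\uu$ are exactly the $L_{z_1}$-images of the return words to $w'$ in $\vv$, so $\dd_\uu(w) = \dd_\vv(w')$ after a suitable renaming of letters. Applying the inductive hypothesis to $\vv$ and $w'$ gives $\dd_\vv(w')$ fixed by $L_{{\rm cyc}^{k'}(\tilde z)} = L_{{\rm cyc}^{k'+n-1}(z)}$, and reducing the exponent modulo $n$ yields the required $k \in \{1, \ldots, n\}$. The base case $w = z_1$ reduces to the same argument with $w' = \varepsilon$ and the direct bijection $L_{z_1}(b) \leftrightarrow b$, giving $\dd_\uu(z_1) = \vv$, which is fixed by $L_{{\rm cyc}^{n-1}(z)}$.

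The main obstacle is to verify the strict inequality $|w'| < |w|$ needed for the induction to terminate. Equality can occur only when $q = \varepsilon$ and every block in $w$ has length one, which forces $w = z_1^\ell$ and $\vv$ to begin with $z_1^\ell$; an inductive check shows this in turn requires $z_1 = z_2 = \cdots = z_\ell$. Since primitivity of $L_z$ prevents all letters of $z$ from coinciding, the degenerate case is handled by iterating the de-substitution chain $\uu \to \vv \to \vv^{(2)} \to \cdots$ at most $n$ times, at which point a strict decrease is forced. The final assertion that $\dd_\uu(w)$ is standard Arnoux--Rauzy follows since it is the fixed point of a primitive morphism in $\mathcal{M}_\A$.
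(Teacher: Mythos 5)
First, a point of comparison: the paper does not prove this proposition at all --- it is imported verbatim as Theorem 24 of \cite{Med19} --- so there is no in-paper argument to measure yours against. Your overall strategy (de-substituting by $L_{z_1}$, identifying $\uu = L_{z_1}(\vv)$ where $\vv$ is the fixed point of $L_{{\rm cyc}^{n-1}(z)}$, and inducting on $|w|$) is the natural one and its skeleton is sound. However, there is a genuine gap in the central combinatorial step.

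The problem is your choice of $w'$ as ``the prefix of $\vv$ corresponding to the full blocks of $w$'', with $q=\varepsilon$ whenever $w$ ends exactly at a block boundary. If the last full block of $w$ is the length-one block $z_1$ (i.e.\ $w'$ ends with the letter $z_1$ and $q=\varepsilon$), the claimed equivalence ``$w$ occurs at block $j$ iff $w'$ occurs at position $j$ of $\vv$'' is false: at another occurrence of $w$, the trailing $z_1$ of $w$ may instead be the first letter of a longer block $z_1b$, so $w$ occurs at block $j$ while $v_{j+|w'|-1}\neq z_1$. Your two-letter comparison cannot detect this, because the second letter of the last block lies outside $w$. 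Concretely, take $z=ab$ (so $L_{ab}\colon a\mapsto aba,\ b\mapsto ab$, the Fibonacci word $\uu=abaababa\cdots = L_a(\vv)$ with $\vv=babbabab\cdots$) and $w=aba$: your recipe gives $w'=ba$, $q=\varepsilon$, since the prefix decomposes as $[ab][a]$; but $aba$ occurs at blocks $0,2,3,5,7,8,\dots$ --- exactly the occurrences of the single letter $b$ in $\vv$ --- whereas $ba$ occurs in $\vv$ only at $0,3,5,8,\dots$. Hence the return words to $w$ are not the $L_{z_1}$-images of the return words to your $w'$, and the inductive step collapses. (This is precisely the ``ambiguous ancestor'' phenomenon that \Cref{def ancestor} of the paper is designed to control in its second example.) The repair is simple but必须 be made explicit: whenever $w$ ends with the letter $z_1$, put that letter into $q$ (so $q=z_1$ and $w'$ loses its last letter). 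With that convention the character following each block image of $w'$ is visible inside $w$, the occurrence bijection does hold, and as a bonus one always gets $|w'|<|w|$ (if $q=\varepsilon$ the last block has length $2$), so your separate termination analysis for $w=z_1^{\ell}$ becomes unnecessary. The base case, the conjugation identity $L_{z_1}\circ L_{{\rm cyc}^{n-1}(z)} = L_z\circ L_{z_1}$, and the bookkeeping of cyclic shifts are all correct.
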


\begin{thm}
Let $z$ be a word  in  $\A^*$  such that  each letter $a\in \A$ occurs in $z$ at least once.
The set $$M=\bigl\{L_{\rm cyc^k(z)} \colon k\in \{1,2,\ldots,|z|\}\bigr\}$$ is closed under derivation.
\end{thm}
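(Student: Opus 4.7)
The plan is to reduce the statement to Medková's result (Proposition \ref{katka}), which handles only prefixes of fixed points, by combining it with Proposition \ref{claim3} and a structural property of standard Arnoux--Rauzy words. First I would observe that every $L_{{\rm cyc}^k(z)}$ is primitive, since cyclic shifts of $z$ contain the same set of letters as $z$; hence each element of $M$ has a unique fixed point, which is a standard Arnoux--Rauzy word, and the assertion is meaningful for every $\varphi \in M$.

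Let $\uu$ be the fixed point of some $L_{z'} \in M$ with $z' = {\rm cyc}^k(z)$, and let $w \in \L(\uu)$. By Proposition \ref{claim3}, it suffices to consider $w$ that is either a right special prefix or a bispecial factor of $\uu$. At this point I would invoke the following classical property of Arnoux--Rauzy words, found in the survey \cite{GlJu}: there is exactly one left special factor of each length (this follows from uniqueness of the right special factor of each length and closedness of $\L(\uu)$ under reversal). Combined with the defining property of a \emph{standard} Arnoux--Rauzy word, namely that every prefix is left special, this forces the left special factors of $\uu$ to coincide with its prefixes. Consequently every bispecial factor is already a right special prefix, and the two cases in Proposition \ref{claim3} collapse to the single case ``$w$ is a prefix of $\uu$''.

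Now Proposition \ref{katka} applies directly: there exists $j \in \{1, 2, \ldots, |z|\}$ such that $\dd_{\uu}(w)$ is fixed, up to a permutation of letters, by $L_{{\rm cyc}^j(z')}$. Because ${\rm cyc}^j \circ {\rm cyc}^k = {\rm cyc}^{j+k}$ when the exponent is reduced modulo $|z|$, this morphism equals $L_{{\rm cyc}^m(z)}$ for some $m \in \{1, 2, \ldots, |z|\}$, i.e., it lies in $M$, which is exactly what is needed. I expect the main obstacle to be not a computational difficulty but the careful verification that the left special factors of a standard Arnoux--Rauzy word are precisely its prefixes; this rests on the known complexity-theoretic properties of Arnoux--Rauzy words, and once established, the remainder of the argument is a mechanical combination of Propositions \ref{claim3} and \ref{katka}.
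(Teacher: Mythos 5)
Your proposal is correct and follows essentially the same route as the paper: reduce to prefixes via \Cref{claim3} and the fact that bispecial factors of a standard Arnoux--Rauzy word are prefixes, then apply \Cref{katka} and close up under cyclic shifts. The only difference is that you spell out why left special factors coincide with prefixes (uniqueness of the left special factor of each length plus the definition of standard), a step the paper asserts without justification.
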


\begin{proof}
Let $\vv$ be a fixed point of $L_{v }$ with   $v={\rm cyc}^k(z)$ for some $k\in \{1,2,\ldots,|z|\}$.
Since  $z$ contains each letter from $\A$,  the word $v$  contains all letters form $\A$  as well and thus  $L_{v }$ is primitive.

As  $\vv$ is a standard Arnoux--Rauzy word, each its bispecial factor  is a prefix of $\vv$.
By \Cref{claim3}, only derived words to prefixes have to be considered.
By \Cref{katka}, each such derived word is fixed (up to a permutation of letters) by a morphism $L_{{\rm cyc}^k(v)}$ for some $
i \in \{1,2,\ldots,|v|\}$.
Obviously, this morphism belongs to $M$.
\end{proof}

\begin{example}
If we apply the previous theorem to the ternary word $abc$, we obtain that the set  $M =\{L_{abc},L_{bca},L_{cab} \}$ is closed under derivation.
Nevertheless, all the 3 morphisms in $M$ fix (up to a permutation of letters)  the same word, namely the Tribonacci word.
This word is fixed by the substitution $\tau$ given in \Cref{Example_Tribonacci}.
Therefore, the set   $\{\tau \}$ is closed under derivation as well.
\end{example}

\section{Example 2: The period doubling morphism} \label{sec:ex2}

The aim of this section is to show that the period doubling substitution $\psi$  determined by $\psi(a)=ab$ and $ \psi(b)=aa$ is closeable under derivation.
For this purpose, we first define the two following substitutions:
\begin{equation} \label{eq:def_xinu}
\nu: \begin{cases}
0 \mapsto 01, \\ 1 \mapsto 02020101, \\ 2 \mapsto 0202,
\end{cases}
\quad \text{ and } \quad
\xi: \begin{cases}
0 \mapsto 011, \\
1 \mapsto 0.
\end{cases}
\end{equation}

Next, we deduce  several auxiliary statements which help us to prove the following main theorem.

	\begin{thm}\label{theorem}
		The sets $ \{\psi, \xi, \nu\} $ and $ \{\xi, \nu\} $ are closed under derivation.
	\end{thm}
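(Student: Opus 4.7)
The plan is to verify each closure property by exhibiting, for every factor $w$ of every fixed point of a morphism in the set, a morphism in the same set that fixes $\dd_\mathbf{u}(w)$ up to renaming of letters. By \Cref{claim3}, only right special prefixes and bispecial factors need to be considered. Denote by $\mathbf{z}$ the fixed point of $\psi$ as in \Cref{example factor a}, and by $\mathbf{x}$ and $\mathbf{y}$ the fixed points of $\xi$ and $\nu$, respectively.

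The first task is to describe the structure of the bispecial factors and right special prefixes of $\mathbf{z}$, $\mathbf{x}$, and $\mathbf{y}$. Since each of these words is fixed by a primitive substitution, I would aim to organise the bispecial factors into finitely many orbits under an ``apply the underlying morphism and adjoin a short border'' operation. Concretely, I would produce a finite list of representative bispecial factors (plus the short right special prefixes that are not captured by these orbits) and prove that every further bispecial factor arises from a representative by iterating the morphism and adding a constant-length border. For $\mathbf{z}$ this step uses the fact that $bb \notin \mathcal{L}(\mathbf{z})$, so every bispecial factor begins and ends with $a$; for $\mathbf{x}$ and $\mathbf{y}$ one invokes analogous combinatorial constraints inherent in $\xi$ and $\nu$.

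The second task is to compute, for each representative $B$, the return words to $B$ and then the coding substitution produced by the algorithm in the proof of \Cref{11}. That algorithm expresses $\varphi(r_j) = r_{s_1}r_{s_2}\cdots r_{s_{\ell_j}}$ for each return word $r_j$ and defines $j \mapsto s_1 s_2 \cdots s_{\ell_j}$; I would execute this calculation in each case, then match the resulting morphism with one of $\psi$, $\xi$, $\nu$ via a bijection between return words and letters of the target alphabet. The example $\dd_\mathbf{z}(a) = \xi$-fixed already illustrates the pattern. Once the representatives are settled, \Cref{claim1} transfers the conclusion to every bispecial factor and right special prefix obtained from a representative by peeling off non-special left or right extensions.

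Finally, the second assertion concerning $\{\xi, \nu\}$ is a byproduct of the same analysis restricted to $\mathbf{x}$ and $\mathbf{y}$: one checks that no derived word arising from these two fixed points requires $\psi$. The main obstacle, in my view, is the combinatorial bookkeeping for $\mathbf{y}$: because $\nu(1)$ has length $8$, return words to bispecial factors of $\mathbf{y}$ are long and there are several distinct orbits of bispecial factors with qualitatively different derived-word behaviour. I expect the enumeration and verification for $\mathbf{y}$ to account for the bulk of the proof, while the verifications for $\mathbf{z}$ and $\mathbf{x}$ should be comparatively short and largely follow the template of \Cref{example factor a}.
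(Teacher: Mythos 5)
There is a genuine gap: your plan defers exactly the step that makes the theorem hard, and it misses the observation that makes the paper's proof short. The paper never analyses the bispecial factors or return words of the fixed points $\mathbf{x}$ of $\xi$ and $\mathbf{y}$ of $\nu$ at all. Instead it uses two facts: first, by \Cref{der k a-aa}, $\mathbf{x} = \dd_{\mathbf{z}}(a)$ and $\mathbf{y} = \dd_{\mathbf{z}}(aa)$, i.e.\ the fixed points of $\xi$ and $\nu$ are themselves derived words of $\mathbf{z}$; second, by \Cref{Durand lemma} (Durand's ``a derived word of a derived word is a derived word''), every $\dd_{\mathbf{x}}(x)$ and every $\dd_{\mathbf{y}}(x)$ equals $\dd_{\mathbf{z}}(y)$ for some factor $y$ of $\mathbf{z}$. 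Combined with \Cref{zuzeni na 2 faktory}, which says that every such derived word is $\dd_{\mathbf{z}}(a)$ or $\dd_{\mathbf{z}}(aa)$, the closure of $\{\xi,\nu\}$ and of $\{\psi,\xi,\nu\}$ follows in a few lines with no new computation.

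Your proposal, by contrast, attacks $\mathbf{x}$ and $\mathbf{y}$ head-on: classify their bispecial factors into finitely many morphism-orbits, compute return words for each representative, and run the algorithm of \Cref{11}. Nothing in your text actually carries this out --- the orbit structure for $\mathbf{y}$ is only asserted to exist (``I would aim to organise\ldots'', ``one invokes analogous combinatorial constraints inherent in $\xi$ and $\nu$''), and you yourself flag the enumeration for $\mathbf{y}$ as the main obstacle without resolving it. Note also that the machinery the paper uses to control bispecial factors of $\mathbf{z}$ (\Cref{vztah bispecialu}, \Cref{stejne}, the ancestor/ambiguity analysis of \Cref{return words of prefix with unique ancestor}) is tailored to the specific shape of $\psi$; reproducing it for $\nu$, whose images have lengths $2$, $8$ and $4$ over a ternary alphabet, is a substantial piece of work that cannot be waved through as ``analogous''. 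So while your route could in principle be completed, as written it is a programme rather than a proof, and the decisive reduction --- that all derived words in sight are already derived words of $\mathbf{z}$ --- is absent.
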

	First, we focus on the derived words  of the fixed point $ \mathbf{z} = abaaabababaaabaaabaa\dotsm $ of the substitution $ \psi $.
The following properties are immediate:
\begin{itemize}
\item  $bb\notin \L(\mathbf{z})$. If $a^i \in \L(\mathbf{z})$,  then $i\leq 3$.
\item  $a$ and  $aa$ are  bispecial factors of $\mathbf{z}$.
\item  Any bispecial factor of length more than 2 has a prefix $ab$ and a suffix $ba$.
\item  The longest common prefix of $\psi(a)$ and $\psi(b)$  is the letter $a$; the longest common suffix of $\psi(a)$ and $\psi(b)$  is the empty word. It implies that $\Phi(v) \vcentcolon =\psi(v)a$ is bispecial whenever $v$ is bispecial.
\end{itemize}

The converse of the very last property also holds (if $\Phi(v)$ is not too short):

\begin{prop}\label{vztah bispecialu}
		Let $ w $ be a non-empty bispecial factor of $ \mathbf{z} $ such that $ w\neq a $ and $ w\neq aa $.
        There exists a bispecial factor $ v $ such that $ \Phi(v) = w $.
\end{prop}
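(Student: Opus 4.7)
The plan is to exploit the unique factorisation of $\mathbf{z}$ into $\psi$-blocks $\psi(a)=ab$ and $\psi(b)=aa$: every block has length two and begins with the letter $a$, and since the first letters agree while the second letters differ, every position of $\mathbf{z}$ lies in a uniquely determined block. My idea is to desubstitute $w$ along this block structure to obtain a preimage $v$, and then transfer the bispeciality of $w$ to $v$.

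First I would establish a parity lemma: every occurrence of $w$ in $\mathbf{z}$ starts at an even index $i$. Combining the bullets preceding the proposition with the elementary observation that $a$ and $aa$ are the only bispecials of length at most two, we have $|w|\ge 3$, with prefix $ab$ and suffix $ba$. If $i$ were odd, the block containing position $i$ would end in the letter $a$, hence be $aa$; then position $i+1$ would begin a new block whose first letter is $a$, contradicting $\mathbf{z}_{i+1}=b$. A mirror argument at the right end, using the suffix $ba$, shows that $i+|w|-1$ is even as well, so $|w|$ is odd. Thus the prefix of $w$ of length $|w|-1$ covers an integral number of $\psi$-blocks and equals $\psi(v)$ for a unique word $v$, giving $w=\psi(v)a=\Phi(v)$; since $\mathbf{z}$ is a fixed point of $\psi$, pulling back the block factorisation of $\mathbf{z}$ reproduces $\mathbf{z}$ itself, so $v$ is automatically a factor of $\mathbf{z}$.

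Next I would verify that $v$ is bispecial. Right extensions are almost free: from $wa,wb\in\L(\mathbf{z})$ we get $wa=\psi(v)\psi(b)=\psi(vb)$ and $wb=\psi(v)\psi(a)=\psi(va)$, and since these images both sit at the even position $i$, pulling back along the block factorisation shows $vb,va\in\L(\mathbf{z})$, so $v$ is right special. For left extensions I would rerun the opening parity argument on occurrences of $aw$ and $bw$: any such occurrence lies at the odd position $i-1$, so the block $\mathbf{z}_{i-2}\mathbf{z}_{i-1}$ is forced to be $aa$ (when the prepended letter is $a$) or $ab$ (when it is $b$), with preimage letter $b$ or $a$ respectively. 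Hence $bv,av\in\L(\mathbf{z})$, so $v$ is also left special.

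I expect the main obstacle to be the opening parity step, specifically justifying that one may speak of ``the block containing position $i$'' rather than facing two overlapping parses. I plan to address this upfront by invoking the uniqueness of the block factorisation of $\mathbf{z}$, which follows from $\psi(a)$ and $\psi(b)$ agreeing in their first letter while differing in the second. After that, everything else is a short chase of letters, with the length parity $|w|$ odd and the identifications $wa=\psi(vb)$, $wb=\psi(va)$ doing most of the work.
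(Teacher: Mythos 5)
Your proposal is correct and follows essentially the same route as the paper: desubstitute $w$ into $\Phi(v)=\psi(v)a$ using the forced prefix $ab$ and suffix $ba$, then transfer right and left speciality from $w$ to $v$ via $\psi(va)=\psi(v)ab$ and $\psi(vb)=\psi(v)aa$. The only difference is presentational: your parity lemma makes explicit the block-synchronisation that the paper leaves implicit, and you argue speciality directly where the paper argues by contraposition.
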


\begin{proof}
As mentioned before, the bispecial factor  $ w $ has a suffix $ ba $ and a prefix $ab$.
Hence, there exists a factor $ v $ such that $\Phi(v) = \psi(v)a = w $ and $ a $ is both a prefix and a suffix of $ v $.
It remains to show that $ v $ is bispecial.
If it is not right special, then $ v $ is followed only by $ a $ or $ b $.
But then $ w $ is followed only by $ b $ or $ a $, respectively, since $ \psi(va) = \psi(v)ab $ and $ \psi(vb) = \psi(v)aa $.
Thus, $w$ is right special.
Similarly, $ v $ is left special, and therefore bispecial.
\end{proof}

As the fixed point $ \mathbf{z}$ has a bispecial factor $aa$ which is not a prefix of  $ \mathbf{z}$,  the description of  derived words to non-prefix factors is more complicated than in  the case of a fixed point of a standard episturmian morphism.
The following notion will be very useful for this purpose.

	\begin{defi}\label{def ancestor}
		Let $ w $ be a non-empty factor of a fixed point $ \mathbf{x} $ of a substitution $ \varphi $.
        Suppose there exist words $ y, y' $ and $ u = u_1u_2\dotsm u_n $ such that $ ywy' = \varphi(u) $, $ |y| < |\varphi(u_1)| $, $ |y'| < |\varphi(u_n)| $, and $u \in \mathcal{L}(\mathbf{x)}$.
        If there is exactly one occurrence of $ w $ in $ \varphi(u) $, then we call $ u $ an \emph{ancestor} of $ w $.
        The set of all ancestors of $ w $ is denoted by~$ A(w) $.
        If there are more occurrences of $ w $ in $ \varphi(u) $, then we say $ w $ \emph{allows an ambiguous ancestor}.
	\end{defi}

	\begin{example}
		Given the fixed point $ \mathbf{z} = abaaabababaaabaaabaa\dotsm $ of the period doubling  substitution $ \psi $, the set of all ancestors of the factor $ aa $ is $ A(aa) = \{b\} $ because $ \psi(b) = aa $ and $ y = \varepsilon, y' = \varepsilon $.
        Since $ \psi(ba) = aaab $, $ y = a $, $ y' = b $ and there are two occurrences of $ aa $ in $ \psi(ba) $, the factor $aa$ allows an ambiguous ancestor.
        The prefix $ aba $ has two ancestors $ aa $ and $ ab $ and it does not allow an ambiguous ancestor.
	\end{example}

	\begin{prop}\label{return words of prefix with unique ancestor}
		Let $ \mathbf{x} $ be a fixed point of an injective substitution $ \varphi $ and $ w $ be a factor of $ \mathbf{x} $ with a unique ancestor $ u $.
        Assume $ w $ does not allow an ambiguous ancestor.
        We have $ \mathbf{d}_{\mathbf{x}}(w) = \mathbf{d}_{\mathbf{x}}(u) $.
	\end{prop}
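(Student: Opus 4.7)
The plan is to set up an order-preserving bijection between the occurrences of $u$ and those of $w$ in $\mathbf{x}$, and to show that it transports return words of $u$ to return words of $w$ by a fixed rewriting, preserving their labelling by order of first appearance. Fix $y,y'$ with $\varphi(u)=ywy'$, $|y|<|\varphi(u_1)|$, and $|y'|<|\varphi(u_n)|$, as in \Cref{def ancestor}. Since $\mathbf{x}=\varphi(\mathbf{x})$, any occurrence of $u$ at position $q$ in $\mathbf{x}$ produces an occurrence of $\varphi(u)$ at position $L_q\vcentcolon=|\varphi(x_0x_1\cdots x_{q-1})|$, and hence an occurrence of $w$ at position $L_q+|y|$. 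The assignment $q\mapsto L_q+|y|$ is strictly increasing, so injective and order-preserving; its surjectivity is where both hypotheses enter. Given an arbitrary occurrence of $w$ in $\mathbf{x}$, I would take the minimal covering factor $u'\in\mathcal{L}(\mathbf{x})$ so that the boundary conditions of \Cref{def ancestor} are satisfied, use uniqueness of the ancestor to conclude $u'=u$, and then use the ``no ambiguous ancestor'' hypothesis to pin down the offset of this occurrence inside $\varphi(u)$ as exactly $|y|$.

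The second step is to read off return words. Given consecutive occurrences $q<q'$ of $u$ with return word $s=x_qx_{q+1}\cdots x_{q'-1}$, the previous paragraph shows that the first occurrence of $w$ after $L_q+|y|$ is $L_{q'}+|y|$, so these are two consecutive occurrences of $w$. The corresponding return word to $w$ equals
\[
x_{L_q+|y|}x_{L_q+|y|+1}\cdots x_{L_{q'}+|y|-1}=y^{-1}\varphi(s)y,
\]
which is well defined because $s$ begins with $u_1$ and $\varphi(u_1)$ has prefix $y$. Injectivity of $\varphi$ makes the rewriting $s\mapsto y^{-1}\varphi(s)y$ an injection, so it yields a bijection $\mathcal{R}_{\mathbf{x}}(u)\to\mathcal{R}_{\mathbf{x}}(w)$.

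To conclude, I would match the two canonical decompositions of $\mathbf{x}$ as concatenations of labelled return words. Because $q\mapsto L_q+|y|$ is order-preserving, the $\ell$-th consecutive return word of $u$ in $\mathbf{x}$ corresponds, under the bijection of the previous paragraph, to the $\ell$-th consecutive return word of $w$ in $\mathbf{x}$. Hence the labelling of return words by order of first appearance transports through, and the $\ell$-th letter of $\mathbf{d}_{\mathbf{x}}(u)$ equals that of $\mathbf{d}_{\mathbf{x}}(w)$, giving $\mathbf{d}_{\mathbf{x}}(w)=\mathbf{d}_{\mathbf{x}}(u)$. I expect the main obstacle to be the surjectivity claim in the first step, since it is the only place in the argument where the two hypotheses ``unique ancestor'' and ``no ambiguous ancestor'' have to be combined.
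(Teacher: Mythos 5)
Your proposal is correct and follows essentially the same route as the paper: the paper also writes $\varphi(u)=ywy'$, argues from the two hypotheses that the occurrences of $w$ correspond exactly to the occurrences of $u$ (phrased there as ``exactly two occurrences of $w$ in $\varphi(r_{i_k})\varphi(u)$''), and transports each return word $r$ of $u$ to the return word $y^{-1}\varphi(r)y$ of $w$, so that both derived words read $i_0i_1i_2\dotsm$. Your version merely makes the order-preserving bijection of occurrences and the surjectivity step more explicit than the paper does.
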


	\begin{proof}
		The infinite word $ \mathbf{x} $ can be written as $ \mathbf{x} = zr_{i_0}r_{i_1}r_{i_2}\dotsm $, where $ r_{i_j} \in \mathcal{R}_{\mathbf{x}}(u) $ for all $ j \in \mathbb{N}_{0} $.
        If $ u $ is a prefix, then $ z = \varepsilon $.
        By the definition of a return word, $ u $ is a prefix of the word $ r_{i_k}u\dotsm $ for all $ k \in \mathbb{N}_{0} $.
        Since $ u $ is a unique ancestor of $ w $ and $ w $ does not allow an ambiguous ancestor, there are exactly two occurrences of $ w $ in $ \varphi(r_{i_k})\varphi(u) $.
        Let $ \varphi(u) = y w y' $.
		\begin{figure}[H]
			\centering
			\includegraphics[width=0.9\textwidth]{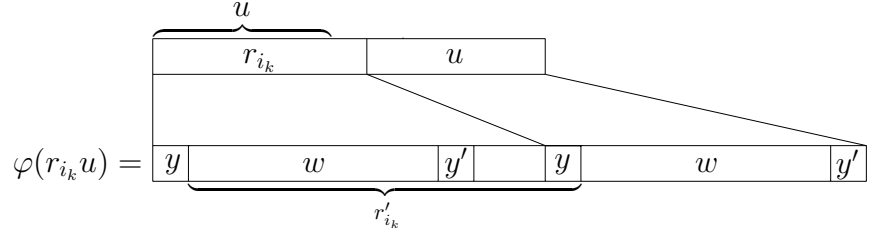}
			\caption{An illustration of $ r_{i_k} u $ and $\varphi(r_{i_k} u)$ in the proof of \Cref{return words of prefix with unique ancestor}.}
			\label{konecne korektni obrazek}
		\end{figure}
        If we define $ r'_{i_k} := y^{-1} \varphi(r_{i_k})y $ as in \Cref{konecne korektni obrazek}, then $ r'_{i_k} \in \mathcal{R}_{\mathbf{x}}(w) $ for all $ k \in \mathbb{N}_{0} $ and we have
		\begin{align*}
			\mathbf{x} &= \varphi(\mathbf{x}) = \varphi(z)\varphi(r_{i_0})\varphi(r_{i_1})\varphi(r_{i_2})\dotsm =\\ &=\underbrace{\varphi(z)y}_{:= z'}\underbrace{y^{(-1)}\varphi(r_{i_0})y}_{r'_{i_0}}\underbrace{y^{(-1)}\varphi(r_{i_1})y}_{r'_{i_1}}\underbrace{y^{(-1)}\varphi(r_{i_2})y}_{_{r'_{i_2}}}y^{(-1)}\dotsm = z'r'_{i_0}r'_{i_1}r'_{i_2}\dotsm.
		\end{align*}
		The derived words of $ u $ and $ w $ are both $ i_0i_1i_2\dotsm $.
	\end{proof}

\begin{lem}\label{stejne} Let $v$ be a non-empty bispecial factor of  the fixed point $ \mathbf{z} $  of the period doubling substitution $\psi$.
We have $ \mathbf{d}_{\mathbf{z}}(\Phi(v)) = \mathbf{d}_{\mathbf{z}}(v) $.
\end{lem}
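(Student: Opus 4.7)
The plan is to exhibit a bijection $r \mapsto \psi(r)$ between the return words to $v$ and the return words to $\Phi(v)$ in $\mathbf{z}$ that preserves the order of first appearance. The derived-word equality then follows by applying $\psi$ to the canonical decomposition of $\mathbf{z}$ into return words to $v$.

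The central step is to locate all occurrences of $\Phi(v) = \psi(v)a$ in $\mathbf{z}$. Using $\mathbf{z} = \psi(\mathbf{z}) = \psi(z_0)\psi(z_1)\cdots$, the word $\mathbf{z}$ partitions into length-$2$ blocks: every even position carries the letter $a$, while position $2\ell+1$ carries $a$ iff $z_\ell = b$. For an occurrence at an even position $2k$, matching odd indices via injectivity of $\psi$ forces $z_{k+i-1} = v_i$ for $i = 1, \ldots, |v|$, i.e.\ $v$ occurs in $\mathbf{z}$ at position $k$; conversely, every occurrence of $v$ at $k$ yields an occurrence of $\Phi(v)$ at $2k$ (position $2k+2|v|$ being automatically $a$). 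If instead an occurrence started at an odd position $2k+1$, the all-$a$ even indices of $\Phi(v)$ would land on odd positions of $\mathbf{z}$, forcing $z_{k+i} = b$ for $i = 0, 1, \ldots, |v|$; since $v$ is non-empty this produces the forbidden factor $bb$, a contradiction. Hence the occurrences of $\Phi(v)$ in $\mathbf{z}$ are exactly the positions $2i$ with $i$ an occurrence of $v$.

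It follows that if $r$ is the return word between consecutive occurrences $i_\ell < i_{\ell+1}$ of $v$, then $\psi(r)$ is the return word between the consecutive occurrences $2i_\ell < 2i_{\ell+1}$ of $\Phi(v)$. Injectivity of $\psi$ (immediate from $\psi(a) = ab$, $\psi(b) = aa$) then makes $\rho \mapsto \psi(\rho)$ a bijection $\mathcal{R}_{\mathbf{z}}(v) \to \mathcal{R}_{\mathbf{z}}(\Phi(v))$, and the order-preserving doubling $i \mapsto 2i$ ensures that the first-appearance labellings of return words on the two sides coincide. The same doubling sends the prefix $p$ of $\mathbf{z}$ preceding the first occurrence of $v$ to the prefix $\psi(p)$ preceding the first occurrence of $\Phi(v)$. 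Matching the decomposition $\mathbf{z} = p \cdot \rho_{j_0}\rho_{j_1}\cdots$ against its $\psi$-image $\mathbf{z} = \psi(p)\cdot\psi(\rho_{j_0})\psi(\rho_{j_1})\cdots$ yields $\mathbf{d}_{\mathbf{z}}(\Phi(v)) = \mathbf{d}_{\mathbf{z}}(v)$.

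The main obstacle is ruling out odd-position occurrences of $\Phi(v)$, which is the crux of the block analysis. One cannot simply invoke \Cref{return words of prefix with unique ancestor} with $u = v$: a length count shows $v$ is too short to qualify as an ancestor of $\Phi(v)$, and in fact $A(\Phi(v)) = \{va, vb\}$, both factors of $\mathbf{z}$ by bispeciality of $v$, so the uniqueness hypothesis fails. The lemma must therefore be proved by the direct block-structure argument above, which nevertheless proceeds in the spirit of that proposition.
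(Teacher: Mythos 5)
Your proof is correct, but it takes a different route from the paper's. The paper first observes that $\psi(v)$ ends in $b$ (because $v$ ends in $a$), hence is not right special, so by \Cref{claim1}\eqref{it:cl1:2} one has $\mathbf{d}_{\mathbf{z}}(\Phi(v)) = \mathbf{d}_{\mathbf{z}}(\psi(v)a) = \mathbf{d}_{\mathbf{z}}(\psi(v))$; it then applies \Cref{return words of prefix with unique ancestor} to $\psi(v)$, whose ancestor is shown to be uniquely $v$ by a parity and last-letter argument. So your closing remark is only half right: you correctly note that the unique-ancestor proposition cannot be applied to $\Phi(v)$ itself (indeed $A(\Phi(v)) = \{va, vb\}$), but it does not follow that a direct block argument is forced --- the paper sidesteps the problem by first stripping the trailing $a$. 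Your argument instead classifies all occurrences of $\Phi(v)$ from scratch via the length-$2$ block structure of $\psi(\mathbf{z})$: even-position occurrences correspond bijectively to occurrences of $v$, and odd-position occurrences are excluded because they would force the forbidden factor $bb$. This buys self-containedness and, in particular, makes fully explicit the step that the paper dismisses with ``it is not difficult to verify that $\psi(v)$ does not allow an ambiguous ancestor''; the price is that you redo, in concrete form, work that \Cref{return words of prefix with unique ancestor} already packages abstractly. Both proofs are sound.
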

	\begin{proof}
Since $v$ is bispecial, the word $a$ is a suffix of $v$  and thus  $ \psi(v) $  has a suffix $b$.
It implies that $ \psi(v) $ is not right special.
Therefore $ \mathbf{d}_{\mathbf{z}}(\psi(v)) = \mathbf{d}_{\mathbf{z}}(\psi(v)a) = \mathbf{d}_{\mathbf{z}}(w) $ with $w= \Phi(v) = \psi(v)a $.

The word $ v $ is surely an ancestor of $ \psi(v) $.
We show that it is the only ancestor.
Suppose there is another ancestor $t$ with $t \neq v$.
Since $ \psi $ is injective, there exist $ y,y' \neq \varepsilon $ such that $ y\psi(v)y' = \psi(t) $.
It follows that $ y$ and  $y' $ are both letters.
Thus, the last letter of $ \psi(v) $ is the first letter of $ \psi(a) $ or $ \psi(b) $ which is in both cases the letter $ a $ --- a contradiction.
Therefore $ A(\psi(v)) = \{v\} $ and it is not difficult to verify that $ \psi(v) $ does not allow an ambiguous ancestor when it contains at least one letter $ b $.
By \Cref{return words of prefix with unique ancestor} we have $			\mathbf{d}_{\mathbf{z}}(v) = \mathbf{d}_{\mathbf{z}}(\psi(v)) = \mathbf{d}_{\mathbf{z}}(w)$.
\end{proof}

	\begin{prop}\label{zuzeni na 2 faktory}
	 If $ w $ is a non-empty factor of $ \mathbf{z} $, then $ \mathbf{d}_{\mathbf{z}}(w) = \mathbf{d}_{\mathbf{z}}(a) $ or $ \mathbf{d}_{\mathbf{z}}(w) = \mathbf{d}_{\mathbf{z}}(aa) $.
     If $ w $ is a non-empty  prefix  of $ \mathbf{z} $, then $ \mathbf{d}_{\mathbf{z}}(w) = \mathbf{d}_{\mathbf{z}}(a) $.
	\end{prop}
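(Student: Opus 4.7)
The plan is to combine \Cref{claim3} with two inductions: one on bispecial factors (powered by \Cref{stejne} and \Cref{vztah bispecialu}) and a structural one that characterizes the right-special prefixes of $\mathbf{z}$.

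First, I would prove by strong induction on $|v|$ that every non-empty bispecial factor $v$ of $\mathbf{z}$ satisfies $\dd_{\mathbf{z}}(v)\in\{\dd_{\mathbf{z}}(a),\dd_{\mathbf{z}}(aa)\}$. The cases $v\in\{a,aa\}$ are trivial; for longer $v$, \Cref{vztah bispecialu} gives a strictly shorter bispecial $v'$ with $\Phi(v')=v$, and \Cref{stejne} yields $\dd_{\mathbf{z}}(v)=\dd_{\mathbf{z}}(v')$, closing the induction. I would also observe that if $\Phi(v')$ is a prefix of $\mathbf{z}$, then so is $v'$: both $\psi(a)$ and $\psi(b)$ have length $2$, so $\psi(v')$ being the initial segment of $\mathbf{z}=\psi(\mathbf{z})$ of length $2|v'|$ forces $v'$ to be the prefix of $\mathbf{z}$ of length $|v'|$. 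Since $aa$ is not a prefix of $\mathbf{z}$, the descent for a bispecial prefix always terminates at $a$; hence $\dd_{\mathbf{z}}(v)=\dd_{\mathbf{z}}(a)$ for every bispecial prefix $v$.

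Next, I would show that every right-special prefix of $\mathbf{z}$ equals some iterate $\Phi^k(a)$. A right-special prefix $q$ must end in $a$ (otherwise $qb$ would contain $bb\notin\L(\mathbf{z})$), and if $|q|\ge 2$ it must start with $ab$. Using $\mathbf{z}=\psi(\mathbf{z})$ together with the fact that both $\psi(a)$ and $\psi(b)$ begin with $a$, any factor of length at least two starting with $ab$ can occur only at an even position in $\mathbf{z}$; consequently an even-length candidate $q=\psi(q')$ would always be followed by the letter $a$, contradicting right-specialness. Thus $|q|$ is odd and $q=\Phi(q')$ for the prefix $q'$ of length $(|q|-1)/2$. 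An analogous aligned-position computation then shows that $q$ is right-special if and only if $q'$ is. Iterating this descent, and noting that the only right-special prefix of length $1$ is $a=\Phi^0(a)$, we obtain $q=\Phi^k(a)$ for some $k\ge 0$; this is a bispecial prefix, and the first paragraph yields $\dd_{\mathbf{z}}(q)=\dd_{\mathbf{z}}(a)$.

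The first statement of the proposition now follows from \Cref{claim3}: any derived word $\dd_{\mathbf{z}}(w)$ equals that of some right-special prefix or bispecial factor, both of which belong to $\{\dd_{\mathbf{z}}(a),\dd_{\mathbf{z}}(aa)\}$ by the two paragraphs above. For the second statement, the second part of \Cref{claim1} lets us extend any non-empty prefix $p$ by its unique right successor (without changing the derived word) until a right-special prefix $q$ is reached; by the previous paragraph $\dd_{\mathbf{z}}(q)=\dd_{\mathbf{z}}(a)$, so $\dd_{\mathbf{z}}(p)=\dd_{\mathbf{z}}(a)$. I expect the main obstacle to be the parity-and-alignment argument characterizing right-special prefixes; once it is in place, the rest is bookkeeping using \Cref{stejne}, \Cref{vztah bispecialu}, and \Cref{claim3}.
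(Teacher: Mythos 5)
Your proof is correct and follows essentially the same route as the paper: reduce via \Cref{claim3} to right special prefixes and bispecial factors, descend along $\Phi$ using \Cref{vztah bispecialu} and \Cref{stejne} until reaching $a$ or $aa$, and observe that the descent from a prefix can only terminate at $a$. The only difference is that you supply the parity-and-alignment argument showing every right special prefix equals $\Phi^k(a)$, a claim the paper simply asserts; your justification of it is sound.
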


	\begin{proof}
    By \Cref{claim1}  we have to describe the derived words to right special prefixes and to bispecial factors only.
 First assume that $w$ is a bispecial factor of $ \mathbf{z} $.
By \Cref{vztah bispecialu}, the factor $w$ can be obtained by iteration of the mapping   $\Phi(v)= \psi(v)a$  starting  from the two initial bispecial factors $a$ and $aa$ (in fact, this a special case of a general construction of bispecial factors from \cite{Kl12}).
By \Cref{stejne}, $ \mathbf{d}_{\mathbf{z}}(w)$ equals to  $\mathbf{d}_{\mathbf{z}}(a) $ or to $\mathbf{d}_{\mathbf{z}}(aa) $.

Now assume that $w$ is a right special prefix of $ \mathbf{z} $.
As the initial bispecial factor $a$ is a prefix of $ \mathbf{z} $,  the bispecial factor  $\Phi^k(a)$ is a prefix of $ \mathbf{z} $ for each $k\in \N$.
Therefore, any right special prefix $w$ of $\mathbf{z}$ is left special as well.
More specifically, any right special prefix of $\mathbf{z}$ equals to
$\Phi^k(a)$  for some  $k\in \N$ and by \Cref{stejne},   $ \mathbf{d}_{\mathbf{z}}(w) = \mathbf{d}_{\mathbf{z}}(\Phi^k(a)) = \mathbf{d}_{\mathbf{z}}(a)$.
	\end{proof}

Now we show that both derived words to a factor of ${\mathbf{z}}$ are fixed by primitive substitutions.
We exploit the following simple tool.
 \begin{obs}\label{little lemma}
		Let $ \mathbf{v} $ be a fixed point of a morphism $ \gamma $ and let $ \mathbf{u} = \alpha(\mathbf{v}) $ where $ \alpha $ is a morphism. If there exists a morphism $ \beta $ such that $ \alpha \gamma = \beta \alpha $, then $ \mathbf{u} $ is fixed by $ \beta $.
	\end{obs}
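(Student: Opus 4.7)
The plan is to verify the conclusion by a direct one-line computation: apply $\beta$ to $\mathbf{u}$, use the relation $\beta\alpha = \alpha\gamma$ to commute $\beta$ past $\alpha$, and finish with the fact that $\mathbf{v}$ is fixed by $\gamma$.

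Concretely, starting from the definition $\mathbf{u} = \alpha(\mathbf{v})$, I would write
\[
\beta(\mathbf{u}) = \beta\bigl(\alpha(\mathbf{v})\bigr) = (\beta\alpha)(\mathbf{v}) = (\alpha\gamma)(\mathbf{v}) = \alpha\bigl(\gamma(\mathbf{v})\bigr) = \alpha(\mathbf{v}) = \mathbf{u}.
\]
The central equality is the hypothesis $\alpha\gamma = \beta\alpha$, while the penultimate equality uses that $\mathbf{v}$ is a fixed point of $\gamma$; the remaining steps are merely associativity of morphism composition extended to infinite words.

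There is no real obstacle. Morphisms extend to $\A^{\N}$ by being applied letter by letter, so the composition identity valid on finite words carries over to infinite words immediately. The only implicit requirement is that $\beta$ act on the image alphabet of $\alpha$, but this is already forced by the relation $\beta\alpha = \alpha\gamma$. The value of the observation lies not in its proof but in the criterion itself: it provides a convenient semiconjugacy-style recipe which I expect to be invoked repeatedly in the subsequent construction of the morphisms $\xi$ and $\nu$ that will be shown to fix the derived words of $\mathbf{z}$.
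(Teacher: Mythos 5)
Your computation is exactly the paper's proof: the one-line chain $\beta(\mathbf{u}) = \beta\alpha(\mathbf{v}) = \alpha\gamma(\mathbf{v}) = \alpha(\mathbf{v}) = \mathbf{u}$ is verbatim what the authors write. Correct and the same approach.
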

 \begin{proof} $ \beta (\mathbf{u}) = \beta \alpha(\mathbf{v}) = \alpha \gamma(\mathbf{\mathbf{v}}) = \alpha(\mathbf{v}) = \mathbf{u}. $
 \end{proof}
	\begin{prop}\label{der k a-aa}
		The derived word $ \mathbf{d}_{\mathbf{z}}(a) $ is fixed by $ \xi $ and the derived word $ \mathbf{d}_{\mathbf{z}}(aa) $ is fixed by $ \nu $ (where $\xi$ and $\nu$ are defined in \eqref{eq:def_xinu}).
	\end{prop}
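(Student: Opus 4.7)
The plan is to handle the two assertions separately.

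For $\mathbf{d}_{\mathbf{z}}(a)$ being fixed by $\xi$: since $a$ is a prefix of $\mathbf{z}$, \Cref{11} applies directly. The return words are $r_0=ab$ and $r_1=a$; from $\psi(r_0)=abaa=r_0r_1r_1$ and $\psi(r_1)=ab=r_0$, Durand's recipe produces the morphism $0\mapsto 011$, $1\mapsto 0$, which is exactly $\xi$. This was already sketched in \Cref{example factor a}.

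For $\mathbf{d}_{\mathbf{z}}(aa)$ being fixed by $\nu$: since $aa$ is not a prefix of $\mathbf{z}$, \Cref{11} does not apply, and the plan is to route through \Cref{der_of_factor} with $p=ab$, so that $pw=abaa$ is the shortest prefix containing $aa$. I would first identify the return words: $\mathcal{R}_{\mathbf{z}}(aa) = \{s_0,s_1,s_2\} = \{a,\ aababab,\ aab\}$ and $\mathcal{R}_{\mathbf{z}}(abaa) = \{R_0,R_1\} = \{abaaabab,\ abaa\}$. Decomposing $p^{-1}R_i p$ into return words to $aa$ yields $p^{-1}R_0 p = aaababab = s_0 s_1$ and $p^{-1}R_1 p = aaab = s_0 s_2$, so the morphism $\psi_0$ from the proof of \Cref{der_of_factor} is $\psi_0\colon 0\mapsto 01,\ 1\mapsto 02$, giving $\mathbf{d}_{\mathbf{z}}(aa) = \psi_0(\mathbf{d}_{\mathbf{z}}(abaa))$.

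Next, I would reduce $\mathbf{d}_{\mathbf{z}}(abaa)$ to $\mathbf{d}_{\mathbf{z}}(a)$. A short structural check (using $bb\notin\mathcal{L}(\mathbf{z})$ and that every $\psi$-image begins with $a$) shows that each of $abaa$, $abaaa$ and $abaaab$ has a unique right extension, namely $a$, $b$, $a$, respectively. Three applications of \Cref{claim1}(2) give $\mathbf{d}_{\mathbf{z}}(abaa) = \mathbf{d}_{\mathbf{z}}(abaaaba) = \mathbf{d}_{\mathbf{z}}(\Phi^2(a))$, and two applications of \Cref{stejne} reduce this to $\mathbf{d}_{\mathbf{z}}(a)$. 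Combining, $\mathbf{d}_{\mathbf{z}}(aa) = \psi_0(\mathbf{d}_{\mathbf{z}}(a))$.

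Since $\mathbf{d}_{\mathbf{z}}(a)$ is fixed by $\xi$, hence by $\xi^2$, \Cref{little lemma} with $\mathbf{v}=\mathbf{d}_{\mathbf{z}}(a)$, $\gamma=\xi^2$ and $\alpha=\psi_0$ reduces the whole proof to the commutation identity $\psi_0\circ\xi^2 = \nu\circ\psi_0$. This is the main obstacle, and the specific shape of $\nu$ is what it forces: trying to commute $\psi_0$ past a single $\xi$ would require $\beta(2)=\varepsilon$, while the extra expansion $\xi^2\colon 0\mapsto 01100,\ 1\mapsto 011$ provides exactly enough letters for the images $\nu(0)=01$, $\nu(1)=02020101$, $\nu(2)=0202$ to materialize. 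The verification is a direct calculation on the two generators:
\[
\psi_0(\xi^2(0)) = \psi_0(01100) = 01\cdot 02\cdot 02\cdot 01\cdot 01 = 0102020101 = \nu(\psi_0(0)),
\]
\[
\psi_0(\xi^2(1)) = \psi_0(011) = 01\cdot 02\cdot 02 = 010202 = \nu(\psi_0(1)).
\]
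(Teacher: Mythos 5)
Your proposal is correct and follows essentially the same route as the paper: the same reduction through \Cref{der_of_factor} with $p=ab$, the same morphism $\alpha\colon 0\mapsto 01,\ 1\mapsto 02$, and the same application of \Cref{little lemma} via the identity $\alpha\circ\xi^2=\nu\circ\alpha$. The only cosmetic difference is that you justify $\mathbf{d}_{\mathbf{z}}(abaa)=\mathbf{d}_{\mathbf{z}}(a)$ by an explicit right-extension chain and \Cref{stejne}, where the paper simply invokes \Cref{zuzeni na 2 faktory}; both are valid.
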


	\begin{proof}
In \Cref{example factor a} above, we show that the derived word $ \mathbf{d}_{\mathbf{z}}(a) $ is fixed by the substitution  $\xi $.

It remains to consider $ \mathbf{d}_{\mathbf{z}}(aa) $.
As $abaa$ is the shortest prefix of ${\mathbf{z}}$ containing the bispecial factor $aa$, we can use the construction from the proof of \Cref{der_of_factor} to find a morphism $ \alpha $ such that $ \mathbf{d}_{\mathbf{z}}(aa) = \alpha(\mathbf{d}_{\mathbf{z}}(abaa)) $.
In our  case $p=ab $ and  $w=aa$.
 According to \Cref{zuzeni na 2 faktory}, the derived word $ \mathbf{d}_{\mathbf{z}}(abaa) $ is fixed by $ \xi $ since $ \mathbf{d}_{\mathbf{z}}(a) $ is fixed by $ \xi $.
 Thus, $ \mathbf{d}_{\mathbf{z}}(abaa) $ is over a binary alphabet, and so the prefix $ abaa $ has exactly two return words, say $ r_0$ and $r_1 $.
 These two return words  can be found  in the prefix  of  $ \mathbf{z} $  of length $16$.
 They are
		$$ r_0 = abaaabab \quad \text{ and } \quad  r_1 = abaa.$$
		It follows from the proof of \Cref{der_of_factor} that $(ab)^{-1} r_0ab $ and $(ab)^{-1} r_1ab $ can be written as a concatenation of return words to $ aa $.
        Specifically, $ r'_0 = a, r'_1 = aababab, r'_2 = aab $ are return words of $ aa $ and $ (ab)^{-1}r_0ab = r'_0r'_1$ and   $ (ab)^{-1}r_1ab = r'_0r'_2 $.
        Hence, according to this claim we have
		\begin{align*}
			\alpha(0) &= 01,\\
			\alpha(1) &= 02.
		\end{align*}

	Note that since $ \mathbf{d}_{\mathbf{z}}(abaa) $ is fixed by $ \xi $, it is also fixed by $ \xi^2 $. By \Cref{little lemma}, if the substitution $ \nu $ satisfies $ \alpha \xi^2 = \nu \alpha $, the proof is finished. This is very easy to verify:
		\[
        \begin{split}
			\alpha \xi^2 (0) &= \alpha(01100) = 0102020101\\
			\nu \alpha (0) &= \nu(01) = 0102020101\\
			\alpha \xi^2 (1) &= \alpha(011) = 010202\\
			\nu \alpha (1) &= \nu(02) = 010202. \qedhere
		\end{split}
        \]
	\end{proof}

	\begin{remark}
		The derived word $ \mathbf{d}_{\mathbf{z}}(aa) $ is also fixed by the  morphism
		\begin{align*}
			 \eta(0) &= \varepsilon \\
			 \eta(1) &= 010202 \\
			 \eta(2) &= 01.
		\end{align*}
		A proof is the same as the proof of \Cref{der k a-aa}, but at the end we have to verify the equality $ \alpha \xi = \eta \alpha $.
        The reason why we prefer  $ \nu $ to  $ \eta $ is  that $ \eta $ is an erasing non-primitive morphism.
	\end{remark}

	\begin{coro}\label{stezejni dusledek}
		If $ w $ is a non-empty factor of $ \mathbf{z} $, then $ \mathbf{d}_{\mathbf{z}}(w) $ is fixed by $ \xi $ or $ \nu $.
	\end{coro}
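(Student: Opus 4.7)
The plan is to assemble the corollary as a short composition of the two immediately preceding results, since essentially all the work has already been done. First I would invoke \Cref{zuzeni na 2 faktory}, which asserts that for every non-empty factor $w$ of $\mathbf{z}$ the derived word $\mathbf{d}_{\mathbf{z}}(w)$ coincides with one of the two distinguished derived words $\mathbf{d}_{\mathbf{z}}(a)$ or $\mathbf{d}_{\mathbf{z}}(aa)$. This reduces the infinite family of factors to a two-case analysis.

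Next I would apply \Cref{der k a-aa}: the first of these two derived words is fixed by $\xi$, and the second is fixed by $\nu$. Combining the two invocations closes the argument, so the proof is just one sentence citing both propositions.

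Since there is really no remaining obstacle, I would present this as a one-line deduction rather than an elaborate argument. The only subtlety worth a brief remark is the meaning of the statement ``$\mathbf{d}_{\mathbf{z}}(w) = \mathbf{d}_{\mathbf{z}}(a)$'' in \Cref{zuzeni na 2 faktory}: it is an equality of infinite words (no relabeling is needed), which means the fixed-point property transfers verbatim from $\mathbf{d}_{\mathbf{z}}(a)$ and $\mathbf{d}_{\mathbf{z}}(aa)$ to $\mathbf{d}_{\mathbf{z}}(w)$ without any intermediate normalization. Nothing else is required.
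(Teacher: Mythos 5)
Your proposal is correct and matches the paper's own proof exactly: the paper also derives the corollary in one line from \Cref{zuzeni na 2 faktory} and \Cref{der k a-aa}. Your added remark that the equality of derived words is literal (no relabeling needed) is a fair observation and does not change the argument.
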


	\begin{proof}
		The corollary follows from \Cref{zuzeni na 2 faktory,der k a-aa}.
	\end{proof}

We conclude this section by the proof of our main result.
For this purpose we need one more ingredient.
It is a modification of  Proposition 6, Item 5 from \cite{Durand98}.
Its proof is almost identical with the proof of the original statement and thus we omit it.

	\begin{lem}\label{Durand lemma}
		Let $ \mathbf{u} $ be a uniformly recurrent word and let $ w $ be its factor.
        Set $ \mathbf{v} = \mathbf{d}_{\mathbf{u}}(w) $.
        For a factor $ x $ of $ \mathbf{v} $, there exists a factor $ y $ of $ \mathbf{u} $ such that $  \mathbf{d}_{\mathbf{v}}(x) = \mathbf{d}_{\mathbf{u}}(y) $.
	\end{lem}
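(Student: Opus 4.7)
The plan is to lift the factor $x$ of $\mathbf{v}$ to a carefully chosen factor $y$ of $\mathbf{u}$ so that occurrences of $x$ in $\mathbf{v}$ correspond bijectively, and in the same order, to occurrences of $y$ in $\mathbf{u}$, in a way that carries return words to return words. Concretely, I would write $\mathcal{R}_{\mathbf{u}}(w) = \{r_0, \ldots, r_{k-1}\}$ and $\mathbf{u} = p\, r_{j_0} r_{j_1} r_{j_2} \cdots$ so that $\mathbf{v} = j_0 j_1 j_2 \cdots$. Given an occurrence $x = j_n j_{n+1} \cdots j_{n+m-1}$ of $x$ in $\mathbf{v}$, I would set
\[
y = r_{j_n} r_{j_{n+1}} \cdots r_{j_{n+m-1}} w.
\]
This $y$ is a factor of $\mathbf{u}$ --- it is the segment of $\mathbf{u}$ running from the $n$-th occurrence of $w$ through to the end of the $(n+m)$-th occurrence of $w$. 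In particular $y$ begins with $w$, ends with $w$, and contains exactly $m+1$ occurrences of $w$, located at the positions $0,\ |r_{j_n}|,\ |r_{j_n} r_{j_{n+1}}|,\ \ldots,\ |y|-|w|$.

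The main step is a rigidity claim: every occurrence of $y$ in $\mathbf{u}$ starts at a boundary of the canonical decomposition $\mathbf{u} = p\, r_{j_0} r_{j_1} \cdots$, say at position $|p\, r_{j_0} \cdots r_{j_{\ell-1}}|$, and the following $m$ blocks there are precisely $r_{j_n}, r_{j_{n+1}}, \ldots, r_{j_{n+m-1}}$. The first part holds because any occurrence of $y$ begins with an occurrence of $w$, and the set of occurrences of $w$ in $\mathbf{u}$ coincides with the block boundaries of this decomposition. Once that is established, matching the inner occurrences of $w$ inside $y$ to the block boundaries of $\mathbf{u}$ forces the block lengths to agree one by one; since distinct return words carry distinct labels, this in turn forces $j_{\ell+t} = j_{n+t}$ for $t = 0,1,\ldots,m-1$, so $\ell$ is itself an occurrence of $x$ in $\mathbf{v}$. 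The converse inclusion --- that every occurrence of $x$ in $\mathbf{v}$ yields an occurrence of $y$ in $\mathbf{u}$ at the corresponding block boundary --- is immediate from the definition of $y$.

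Applying this correspondence to two consecutive occurrences of $x$ in $\mathbf{v}$ at positions $k_1 < k_2$ shows that the return word $j_{k_1} j_{k_1+1} \cdots j_{k_2-1}$ of $x$ lifts to the return word $r_{j_{k_1}} r_{j_{k_1+1}} \cdots r_{j_{k_2-1}}$ of $y$, and this lifting is a bijection $\mathcal{R}_{\mathbf{v}}(x) \to \mathcal{R}_{\mathbf{u}}(y)$. Because the bijection preserves the order in which the occurrences of $x$ and $y$ appear, it also preserves the first-appearance labelling used to define the derived word, and one concludes $\mathbf{d}_{\mathbf{v}}(x) = \mathbf{d}_{\mathbf{u}}(y)$. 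The most delicate point in this plan is the rigidity step above --- ruling out a mis-aligned occurrence of $y$ that would straddle a block boundary of $\mathbf{u}$ --- and it is exactly there that one crucially uses both that $y$ starts with $w$ and that the block boundaries of the canonical decomposition of $\mathbf{u}$ coincide with the occurrences of $w$.
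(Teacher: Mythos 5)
Your proof is correct and is essentially the argument the paper has in mind: the paper omits the proof, citing that it is nearly identical to Durand's Proposition~6, Item~5, and your lift $y = r_{j_n}\cdots r_{j_{n+m-1}}w$ together with the rigidity step (every occurrence of $y$ starts at a block boundary because $y$ begins with $w$, and the interior occurrences of $w$ then pin down the block labels) is exactly that standard argument. The one point worth keeping explicit, as you do, is that the order-preserving bijection between occurrences also matches the first-appearance labellings, so the two derived words agree literally and not merely up to renaming of letters.
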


\begin{proof}[Proof of \Cref{theorem}]
 Let $ \mathbf{v} $ be a fixed point of the  primitive substitution $ \xi $  and $x$  be a factor of $ \mathbf{v} $.
 By \Cref{der k a-aa}, we have  $ \mathbf{v} = \mathbf{d}_{\mathbf{z}}(a)$.
 By \Cref{Durand lemma}, there exists a factor $y$ in  ${\mathbf{z}}$ such that   $\mathbf{d}_{\mathbf{v}}(x) = \mathbf{d}_{\mathbf{z}}(y) $.
 \Cref{zuzeni na 2 faktory} implies that $\mathbf{d}_{\mathbf{v}}(x)$ equals  $\mathbf{d}_{\mathbf{z}}(a)$ or $\mathbf{d}_{\mathbf{z}}(aa)$.
 Therefore, $\mathbf{d}_{\mathbf{v}}(x)$ is fixed by $\xi$ or $\nu$.

 The same reasoning gives that the derived word to any factor of the fixed point of $\nu$ is fixed by $\xi$ or by $\nu$.
 By \Cref{DEF}, the set $\{\nu, \xi\}$  is closed under derivation.

 As $\mathbf{d}_{\mathbf{z}}(\varepsilon) = {\mathbf{z}}$  and the derived word to  any non-empty factor of ${\mathbf{z}}$ is fixed by $\xi$ or by $\nu$,
 the set $\{\nu, \xi, \psi\}$ is also closed under derivation.
\end{proof}


\section*{Acknowledgments}

This work was supported by the Ministry of Education, Youth and Sports of the Czech Republic, project no. CZ.02.1.01/0.0/0.0/16\_019/0000778.
We also acknowledge financial support of the Grant Agency of the Czech Technical University in Prague, grant No. SGS17/193/OHK4/3T/14.
We thank to Michel Dekking for attracting our attention to the article~\cite{HuangWen17}.


\end{document}